\newtheorem{theorem}{Theorem}[section]
\newtheorem{lemma}{Lemma}[section]
\numberwithin{equation}{section}
\begin{document}

\markboth{}{}

\title[$L^p$ uniform random walk-type approximation for Fractional Brownian motion] {$L^p$ uniform random walk-type approximation for Fractional Brownian motion with Hurst exponent $0 < H < \frac{1}{2}$}

\author{Alberto Ohashi}

\address{Departamento de Matem\'atica, Universidade de Bras\'ilia, 70910-900, Bras\'ilia, Brazil.}\email{amfohashi@gmail.com}

\author{Francys A. de Souza}

\address{Mathematics Department, Rua S´ergio Buarque de Holanda, 651, UNICAMP - State University of Campinas, 13083-859 Campinas, Brazil.
}\email{francys@estatcamp.com.br}

\date{\today}

\keywords{Fractional Brownian motion; Gaussian processes} \subjclass{Primary: 60G15; Secondary: 60G18}

\begin{center}
\end{center}
\begin{abstract}
In this note, we prove an $L^p$ uniform approximation of the fractional Brownian motion with Hurst exponent $0 < H < \frac{1}{2}$ by means of a family of continuous-time random walks imbedded on a given Brownian motion. The approximation is constructed via a pathwise representation of the fractional Brownian motion in terms of a standard Brownian motion. For an arbitrary choice $\epsilon_k$ for the size of the jumps of the family of random walks, the rate of convergence of the approximation scheme is $O(\epsilon_k^{p(1-2\lambda)+ 2(\delta-1)})$ whenever $\max\{0,1-\frac{pH}{2}\}< \delta < 1$, $\lambda \in \big(\frac{1-H}{2}, \frac{1}{2} + \frac{\delta-1}{p}\big)$.
\end{abstract}
\maketitle

\section{Introduction}
The fractional Brownian motion $B_H$ (henceforth abbreviated by FBM) with Hurst exponent $H \in (0,1)$ is the zero mean Gaussian process with covariance function $\mathbb{E}[B_H(t)B_H(s)] = \frac{1}{2}[s^{2H} + t^{2H}  - |t-s|^{2H}]$. It turns out that FBM is the only continuous Gaussian process which is self-similar with stationary increments. There are many applications of FBM in sciences, including Physics,
Biology, Hydrology, network research, Finance; see Biagini et al. \cite{biagini} and other references therein. In Probability theory, FBM is the canonical model of Gaussian process which exhibits non-trivial increment correlations (for $H \neq \frac{1}{2}$) and it is still amenable to rigorous modelling by means of Malliavin calculus and rough path techniques. The goal of this note is to present $L^p$ uniform approximations for the FBM with Hurst exponent $0 < H < \frac{1}{2}$ by means of a family of continuous time random walks.

The study of approximations of FBM (in the sense of weak convergence) dates back from 1970s with the pioneering works of Davydov \cite{davydov} and Taqqu \cite{taqqu}. Since then, many authors have been proposed alternative weak approximation methods based on correlated random walks, random wavelet series, Poisson processes, etc. In this direction, we refer the reader to Bardina et al \cite{bardina}, Delgado and Jolis \cite{delgado}, Enriquez \cite{enriquez},  Kl\"uppelberg and K\"uhn \cite{klu} and Li and Dai \cite{dai} and other references therein. Almost sure uniform approximations for FBM have been studied by many authors in different contexts via transport processes, series representations, etc. In this direction, we refer the reader to Garzon et al \cite{garzon}, Hong et al \cite{hong},  Dzhaparidze and Van Zanten \cite{dzh}, Chen and Dong \cite{chen}, Igloi \cite{igloi} and other references therein. Other approximations in $L^p(\Omega\times [0,T])$ were proposed by Decreusefond and Ustunel \cite{ustunel} and $L^p$-estimates (not uniform in time) by Mishura \cite{mishura}.

In this work, we present an $L^p$-approximation for a given FBM with Hurst exponent $H \in (0,\frac{1}{2})$ in the supremum norm over a given time interval $[0,T]$. Motivated by the stochastic analysis of non-Markovian phenomena in the rough regime, we restrict our analysis to the most delicate case $0 < H < \frac{1}{2}$. An important step in our analysis is a pathwise representation $B_H = \Lambda_H (B)$ of the FBM with respect to Brownian motion $B$, where $\Lambda_H$ is a suitable bounded linear operator from the space of $\lambda$-H\"older continuous functions (with $\frac{1}{2}-H < \lambda < \frac{1}{2}$) to the space of continuous functions equipped with the sup norm. The representation (see Theorem \ref{repTH}) is a simple consequence of the classical Volterra-type representation. Our approximation is a functional of $\Lambda_H$ applied to a skeletal continuous-time random walk $A^k$ (previously suggested by F. Knight \cite{knight1}) imbedded on a given Brownian motion $B$ which satisfies

\begin{equation}\label{epsilonp}
\sup_{0\le t \le T}|A^k(t) - B(t)|\le \epsilon_k~a.s
\end{equation}
for a given sequence $\{\epsilon_k; k\ge 1\}$ such that $\epsilon_k \downarrow 0$ as $k\rightarrow +\infty$.

For a given sequence $\{\epsilon_k; k\ge 1\}$ realizing (\ref{epsilonp}), our approximation scheme admits a rate of convergence of order $O(\epsilon_k^{p(1-2\lambda)+ 2(\delta-1)})$ whenever $\max\{0,1-\frac{pH}{2}\}< \delta < 1$, $\lambda \in \big(\frac{1-H}{2}, \frac{1}{2} + \frac{\delta-1}{p}\big)$. From the perspective of numerical analysis, one advantage of our approximation scheme is the possibility to simulate FBM by only simulating the first time Brownian motion hits $\pm 1$ (see e.g \cite{Burq_Jones2008,milstein}) and a Bernoulli random variable which must be composed with suitable singular deterministic integrals. From a theoretical perspective, such type of approximation plays a key role in the stochastic analysis of processes adapted to FBM via the methodology presented in Le\~ao, Ohashi and Simas \cite{LEAO_OHASHI2017.1} in the context of functional stochastic calculus. In particular, the main result of this short note (Theorem \ref{FBMappHless0.5}) is one of the key arguments to tackle non-Markovian optimal stochastic control problems driven by FBM in the rough regime $0< H < \frac{1}{2}$ as showed in Theorems 6.2 and 6.3 in Le\~ao, Ohashi and Souza \cite{LOF2020}.

We stress that a random walk-type (almost sure) approximation based on Mandelbrot-van Ness representation was studied by Szabados \cite{szabados}. He gave an approximation of FBM for $H \in (\frac{1}{4},1)$ with convergence rate $O(n^{-\min(H-\frac{1}{4}, \frac{1}{4})~2\text{log}2} \text{log}~n)$ at the nth approximation step. Moreover, his convergence is established with respect to some FBM. In contrast, we study the problem with respect to a given FBM with $H \in (0,\frac{1}{2})$ and we are interested in $L^p$ estimates in the supremum norm. Finally, we remark that the scheme introduced in this article is expected to work for approximations of the FBM in the $p$-variation topology for $p> \frac{1}{H} > 2$. We leave this investigation to a future project.

The remainder of this note is organized as follows. Section \ref{pathwiseFBMsection} presents a pathwise representation of FBM with $H \in (0,\frac{1}{2})$ which is an important step in our approximation. Section \ref{mainsection} presents the proof of the main result of this note, namely Theorem \ref{FBMappHless0.5}.

\section{A pathwise representation of FBM with $H \in (0,\frac{1}{2})$}\label{pathwiseFBMsection}
Throughout this note, $(\Omega, \mathbb{F},\mathbb{P})$ denotes a filtered probability space equipped with a one-dimensional standard Brownian motion $B$ where $\mathbb{F}:=(\mathcal{F}_t)_{t\ge 0}$ is the usual $\mathbb{P}$-augmentation of the filtration generated by $B$ under a fixed probability measure $\mathbb{P}$. For a real-valued function $f:[0,T]\rightarrow \mathbb{R}$, we denote

$$\|f\|_\infty:=\sup_{0\le t\le T}|f(t)|,$$
where $ 0 < T < +\infty$ is a fixed terminal time.

In the sequel, we derive a pathwise FBM representation for $0 < H < \frac{1}{2}$ which will play a key role in constructing our approximation scheme for FBM. It is a well-known fact that the FBM can be represented w.r.t a Brownian motion $B$ as follows

$$
\int_0^t K_H(t,s)dB(s); 0\le t\le T,
$$
where $K_H$ is a deterministic kernel described by

$$K_H(t,s):=c_H\Bigg[ t^{H-\frac{1}{2}}s^{\frac{1}{2}-H}(t-s)^{H-\frac{1}{2}} - \left(H-\frac{1}{2}\right)s^{\frac{1}{2}-H}\int_s^t u^{H-\frac{3}{2}}(u-s)^{H-\frac{1}{2}}du\Bigg]; 0< s < t,$$
for a positive constant $c_H$. Let us define

$$
K_{H,1}(t,s):=c_H t^{H-\frac{1}{2}}s^{\frac{1}{2}-H}(t-s)^{H-\frac{1}{2}},
$$
$$K_{H,2}(t,s):=c_H\left(\frac{1}{2}-H\right)s^{\frac{1}{2}-H}\int_s^t u^{H-\frac{3}{2}}(u-s)^{H-\frac{1}{2}}du,
$$
for $0< s < t$. By definition, $K_H(t,s) = K_{H,1}(t,s) + K_{H,2}(t,s); 0 < s < t$. By making change of variables $v=\frac{u}{s}$, we can write

\begin{equation}\label{solveint}
\int_s^t u^{H-\frac{3}{2}}(u-s)^{H-\frac{1}{2}}du= \theta_H(t,s) s^{2H-1},
\end{equation}
where $\theta_H(t,s):=\int_1^{\frac{t}{s}}v^{H-\frac{3}{2}}(v-1)^{H-\frac{1}{2}}dv$, for $0< s\leq t$. We observe

\begin{equation}\label{solveint1}
\int_1^{+\infty}v^{H-\frac{3}{2}}(v-1)^{H-\frac{1}{2}}dv < \infty,
\end{equation}
for $0 < H < \frac{1}{2}$. Therefore,
$$K_{H,2}(t,s)=c_H\left(\frac{1}{2}-H\right)s^{H-\frac{1}{2}}\int_1^{\frac{t}{s}}v^{H-\frac{3}{2}}(v-1)^{H-\frac{1}{2}}dv$$
and

\begin{equation}\label{partialderK}
\begin{split}
\partial_s K_{H,1}(t,s) &= c_H\left(\frac{1}{2}-H\right)\Big[t^{H-\frac{1}{2}}s^{-\frac{1}{2}-H}(t-s)^{H-\frac{1}{2}} +t^{H-\frac{1}{2}}s^{\frac{1}{2}-H}(t-s)^{H-\frac{3}{2}} \Big],\\
\partial_s K_{H,2}(t,s) &=c_H\left(\frac{1}{2}-H\right) \Bigg[ -t^{H-\frac{1}{2}}s^{-\frac{1}{2}-H}(t-s)^{H-\frac{1}{2}} +\left(H-\frac{1}{2}\right)s^{H-\frac{3}{2}}\theta_H(t,s)\Bigg],
\end{split}
\end{equation}
for $0 < s < t$. In the sequel, if $f:[0,T]\rightarrow \mathbb{R}$, we denote

$$\|f\|_{\lambda}:=\sup_{0 < s < r\le T }\frac{|f(r) - f(s)|}{|r-s|^\lambda},$$
for $0 < \lambda\le 1$. Let $\mathbf{C}^\lambda_0$ be the space of all H\"{o}lder continuous functions $f:[0,T]\rightarrow\mathbb{R}$ with $f(0)=0$ equipped with the norm $\|\cdot\|_\lambda$. For each $f\in \mathbf{C}^\lambda_0$, we define

$$(\Lambda_Hf)(t):=\int_0^t [f(t)-f(s)]\partial_s K_{H,1}(t,s)ds - \int_0^t \partial_s K_{H,2}(t,s)f(s)ds; 0\le t\le T.$$
\begin{lemma}\label{boundHless}
If $\frac{1}{2}-H<\lambda<\frac{1}{2}$, there exists a constant $C$ which depends on $H$ such that

$$\sup_{0\leq t\leq T}|(\Lambda_Hf)(t)|\leq C T^{H-\frac{1}{2}+\lambda}\|f\|_{\lambda},$$
for every $f\in \mathbf{C}^\lambda_0$.
\end{lemma}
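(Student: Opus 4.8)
The plan is to bound the two integrals defining $\Lambda_H f$ separately, exploiting the explicit expressions for $\partial_s K_{H,1}$ and $\partial_s K_{H,2}$ in (\ref{partialderK}) together with the H\"older bound $|f(r)-f(u)|\le \|f\|_\lambda |r-u|^\lambda$ and the normalization $f(0)=0$. Writing $(\Lambda_H f)(t)=I_1(t)-I_2(t)$ with $I_1$ the first integral and $I_2$ the second, every estimate should reduce to a Beta-type integral $\int_0^t s^{a-1}(t-s)^{b-1}\,ds = t^{a+b-1}B(a,b)$, and I would arrange the exponents so that $a+b-1=\lambda$ in each case, producing the uniform factor $t^{H-\frac12+\lambda}$ once the prefactor $t^{H-\frac12}$ is taken into account.

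For $I_1$, I would insert $|f(t)-f(s)|\le \|f\|_\lambda (t-s)^\lambda$ and use $\tfrac12-H>0$ to write $|\partial_s K_{H,1}(t,s)| = c_H(\tfrac12-H)t^{H-\frac12}\big[s^{-\frac12-H}(t-s)^{H-\frac12}+s^{\frac12-H}(t-s)^{H-\frac32}\big]$. This produces two Beta integrals: $\int_0^t s^{-\frac12-H}(t-s)^{H-\frac12+\lambda}\,ds = t^\lambda B(\tfrac12-H,\tfrac12+H+\lambda)$, which converges because $\tfrac12-H>0$, and $\int_0^t s^{\frac12-H}(t-s)^{H-\frac32+\lambda}\,ds = t^\lambda B(\tfrac32-H,H-\tfrac12+\lambda)$, whose convergence at $s=t$ is precisely where the hypothesis $\lambda>\tfrac12-H$ enters to guarantee $H-\tfrac12+\lambda>0$. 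Both are $O(t^\lambda)$, so $|I_1(t)|\le C\,t^{H-\frac12+\lambda}\|f\|_\lambda$.

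For $I_2$, I would use $|f(s)|=|f(s)-f(0)|\le \|f\|_\lambda s^\lambda$. The first term of $\partial_s K_{H,2}$ in (\ref{partialderK}) yields $\int_0^t s^{-\frac12-H+\lambda}(t-s)^{H-\frac12}\,ds = t^\lambda B(\tfrac12-H+\lambda,\tfrac12+H)$, again $O(t^\lambda)$ after the prefactor $t^{H-\frac12}$. For the second term the key observation is that $\theta_H(t,s)$ is uniformly bounded: by (\ref{solveint1}) one has $\theta_H(t,s)\le \int_1^{+\infty}v^{H-\frac32}(v-1)^{H-\frac12}\,dv<\infty$, so the factor $s^{H-\frac32}\theta_H(t,s)$ is dominated by a constant times $s^{H-\frac32}$, and $\int_0^t s^{H-\frac32+\lambda}\,ds = (H-\tfrac12+\lambda)^{-1}t^{H-\frac12+\lambda}$ converges at $s=0$ exactly under $\lambda>\tfrac12-H$. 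Collecting these gives $|I_2(t)|\le C\,t^{H-\frac12+\lambda}\|f\|_\lambda$.

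Finally, combining both estimates and using $H-\tfrac12+\lambda>0$ (so that $t^{H-\frac12+\lambda}\le T^{H-\frac12+\lambda}$ on $[0,T]$) yields the claimed bound. The main obstacle is the singular behaviour of $\partial_s K_{H,1}$ as $s\to t$, which is tamed by the $f(t)-f(s)$ structure of the first integral; the two integrability thresholds, at $s=t$ for $I_1$ and at $s=0$ for the $\theta_H$ part of $I_2$, are exactly what force the lower bound $\lambda>\tfrac12-H$, while the upper bound $\lambda<\tfrac12$ keeps $\lambda$ in the regularity range relevant for the later application to Brownian paths.
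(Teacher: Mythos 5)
Your proof is correct and takes essentially the same route as the paper: the paper's proof simply asserts (without derivation) the two estimates $\sup_{0\le t\le T}\int_0^t|\partial_s K_{H,1}(t,s)||f(t)-f(s)|\,ds \le C\|f\|_\lambda T^{H+\lambda-\frac{1}{2}}$ and $\sup_{0\le t\le T}\int_0^t|\partial_s K_{H,2}(t,s)||f(s)|\,ds \le C\|f\|_\lambda T^{H+\lambda-\frac{1}{2}}$, which are exactly the bounds you establish in detail via the Beta integrals and the uniform bound on $\theta_H$ from (\ref{solveint1}). Your exponent bookkeeping is accurate, and your identification of $\lambda>\frac{1}{2}-H$ as the integrability threshold both at $s=t$ (second term of $\partial_s K_{H,1}$) and at $s=0$ (the $s^{H-\frac{3}{2}}\theta_H$ term of $\partial_s K_{H,2}$) is precisely the content the paper leaves to the reader.
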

\begin{proof}
Just observe there exists a constant $C$ (which only depends on $H$) such that the following estimates hold:
$$
\sup_{0\le t\le T}\int_0^t|\partial_s K_{H,1}(t,s)||f(t)-f(s)|ds \leq C\|f\|_\lambda T^{H+\lambda-\frac{1}{2}}
$$
and
$$\sup_{0\le t\le T}\int_0^t|\partial_s K_{H,2}(t,s)(t,s)||f(s)|ds\le  C \|f\|_{\lambda} T^{H+\lambda-\frac{1}{2}},
$$
whenever $\frac{1}{2}-H<\lambda<\frac{1}{2}$.
\end{proof}

We are now able to prove a pathwise representation for the FBM with $0 < H < \frac{1}{2}$ with respect to a given standard Brownian motion.
\begin{theorem}\label{repTH}
Any FBM with exponent $0 < H < \frac{1}{2}$ on a time interval $[0,T]$ can be represented by $(\Lambda_H B)$ for a real-valued standard Brownian motion $B$.
\end{theorem}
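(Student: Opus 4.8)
The plan is to start from the classical Volterra representation $B_H(t) = \int_0^t K_H(t,s)\,dB(s)$ (a Wiener integral against the given Brownian motion $B$) and to transform it, via integration by parts, into the pathwise expression defining $(\Lambda_H B)(t)$. Since $K_H = K_{H,1} + K_{H,2}$, I would treat the two pieces separately, because they carry singularities at opposite endpoints: $K_{H,1}(t,\cdot)$ blows up like $(t-s)^{H-\frac12}$ as $s\uparrow t$ while vanishing at $s=0$, whereas $K_{H,2}(t,\cdot)$ is bounded (indeed $O((t-s)^{H+\frac12})$) near $s=t$ but behaves like $s^{H-\frac12}$ near $s=0$. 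On any subinterval bounded away from the relevant endpoint the kernel is $C^1$ and deterministic, so the Wiener integral coincides a.s. with the Riemann--Stieltjes integral and the elementary identity $\int g\,dB = gB\big|_{\mathrm{bdry}} - \int B\,g'\,ds$ applies.

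For $K_{H,1}$ I would introduce the cutoff $\int_0^{t-\epsilon}$, integrate by parts, and then rewrite $B(s) = B(t) - (B(t)-B(s))$ inside the resulting Riemann integral. Using $\int_0^{t-\epsilon}\partial_s K_{H,1}(t,s)\,ds = K_{H,1}(t,t-\epsilon)$ (as $K_{H,1}(t,0^+)=0$), the $B(t)$-part exactly cancels the upper boundary contribution $K_{H,1}(t,t-\epsilon)B(t-\epsilon)$ up to the increment $B(t-\epsilon)-B(t)$, leaving
$$\int_0^{t-\epsilon}K_{H,1}(t,s)\,dB(s) = K_{H,1}(t,t-\epsilon)\big[B(t-\epsilon)-B(t)\big] + \int_0^{t-\epsilon}\big[B(t)-B(s)\big]\partial_s K_{H,1}(t,s)\,ds.$$
The boundary factor satisfies $K_{H,1}(t,t-\epsilon)\sim c_H\epsilon^{H-\frac12}$ while $|B(t-\epsilon)-B(t)|\lesssim \epsilon^{\lambda}$ by H\"older continuity of $B$, so the product is $O(\epsilon^{H-\frac12+\lambda})\to 0$ precisely because $\lambda>\frac12-H$. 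The same inequality $\lambda+H>\frac12$ makes $|B(t)-B(s)|\,|\partial_s K_{H,1}(t,s)|\lesssim (t-s)^{\lambda+H-\frac32}$ integrable at $s=t$ (and $H<\frac12$ handles the $s^{-\frac12-H}$ growth at $s=0$), so letting $\epsilon\downarrow 0$ yields the first term of $(\Lambda_H B)$.

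For $K_{H,2}$ I would instead cut off at the left endpoint, $\int_\epsilon^{t}$, and integrate by parts. The upper boundary term vanishes since $K_{H,2}(t,t)=0$, and the lower boundary term $K_{H,2}(t,\epsilon)B(\epsilon)\sim \epsilon^{H-\frac12}\cdot\epsilon^{\lambda}\to 0$ again by $\lambda>\frac12-H$; the remaining integral $-\int_\epsilon^t B(s)\partial_s K_{H,2}(t,s)\,ds$ converges absolutely as $\epsilon\downarrow 0$ under the same H\"older bookkeeping, producing the second term of $(\Lambda_H B)$. Summing the two limits identifies $\int_0^t K_H(t,s)\,dB(s)$ with $(\Lambda_H B)(t)$ pathwise a.s.; since almost every Brownian path lies in $\mathbf{C}^\lambda_0$ for every $\lambda<\frac12$ (Kolmogorov's criterion), Lemma \ref{boundHless} guarantees that $(\Lambda_H B)$ is a well-defined continuous process, and being a.s. equal to the Volterra integral it is a version of the FBM. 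I expect the delicate point to be the non-integrable singularity of $\partial_s K_{H,1}$ at $s=t$: it forces the $[B(t)-B(s)]$ regularization and requires the sharp exponent $\lambda>\frac12-H$ both to annihilate the boundary contribution and to secure absolute convergence of the limiting integral --- this is exactly the role of the hypothesis $\frac12-H<\lambda<\frac12$.
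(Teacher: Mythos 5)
Your proposal is correct, but it takes a genuinely different route from the paper's. The paper never performs a pathwise identification: it observes that it suffices to match covariances of $\int_0^t K_H(t,s)\,dB(s)$ and $(\Lambda_H B)(t)$ against arbitrary Brownian increments $B(b)-B(a)$, $0<a<b<t$ (both random variables being linear functionals of $B$, i.e.\ first Wiener chaos elements, so equality of these inner products forces a.s.\ equality). The left side is computed by the It\^o isometry followed by a deterministic integration by parts of $\int_a^b K_{H,1}(t,s)\,ds$ and $\int_a^b K_{H,2}(t,s)\,ds$, and the right side by Malliavin integration by parts, $\mathbb{E}\big[(B(b)-B(a))F\big]=\mathbb{E}\int_a^b \mathbf{D}_r F\,dr$, with $\mathbf{D}_r(\Lambda_H B)(t)$ computed termwise; the two deterministic expressions coincide. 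That argument is shorter and sidesteps your $\epsilon$-regularization entirely, at the price of invoking Malliavin calculus and yielding equality only in $L^2$ for each fixed $t$ (then upgraded by continuity). Your pathwise integration by parts, with cutoffs at the two opposite singular endpoints, the compensation $B(s)=B(t)-(B(t)-B(s))$, and the bookkeeping $\lambda+H>\tfrac12$, proves the stronger statement that $\Lambda_H$ reproduces the Volterra integral path by path on $\mathbf{C}^\lambda_0$, and it explains \emph{why} the operator carries the compensated increment $[f(t)-f(s)]$ in its first term --- information the duality proof hides. Two routine points you should make explicit to close the argument: (i) to identify the $\epsilon\downarrow 0$ limits of the truncated Wiener integrals with $\int_0^t K_{H,i}(t,s)\,dB(s)$, note $K_{H,i}(t,\cdot)\in L^2(0,t)$ (true, since $2H-1>-1$), so the truncated integrals converge in $L^2$ and a.s.\ along a subsequence; (ii) $K_{H,1}(t,\cdot)$ is not $C^1$ up to $s=0$ (its derivative has the integrable singularity $s^{-\frac12-H}$), so the Riemann--Stieltjes integration by parts on $[0,t-\epsilon]$ should be stated for absolutely continuous integrands with derivative in $L^1$, which is standard. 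With these additions, your proof is complete and constitutes a valid, more elementary alternative to the paper's chaos/Malliavin argument.
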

\begin{proof}
We fix $0 < a < b < t$. We recall that any FBM $B_H$ with exponent $0 < H < \frac{1}{2}$ can be represented by $B_H(t) = \int_0^t K_H(t,s)dB(s)$ for some Brownian motion $B$. It is sufficient to check the following identity

\begin{equation}\label{ref1}
\mathbb{E}\Bigg[ (B(b) - B(a))\int_0^t K_H(t,s)dB(s)\Bigg] = \mathbb{E}\Big[ (B(b) - B(a))(\Lambda_H B)(t)\Big].
\end{equation}
In one hand, by It\^o's isometry, the left hand side of (\ref{ref1}) equals to $\int_a^b K_H(t,s)ds = \int_a^b K_{H,1}(t,s)ds + \int_a^b K_{H,2}(t,s)ds$ and integration by parts yields

\begin{eqnarray}
\nonumber\int_a^b K_H(t,s)ds &=& (b-a)K_{H,1}(t,a) + \int_a^b (b-s)\partial_sK_{H,1}(t,s)ds\\
\label{ref0}&+&(b-a)K_{H,2}(t,b) - \int_a^b (s-a)\partial_s K_{H,2}(t,s)ds.
\end{eqnarray}
On the other hand, by using integration by parts (in the sense of Malliavin calculus), we observe the right-hand side of (\ref{ref1}) equals to

\begin{eqnarray}
\nonumber\mathbb{E}\int_a^b \mathbf{D}_r (\Lambda_H B)(t)dr&=& \int_0^t \int_a^b \mathds{1}_{(s,t]}(r)\partial_s K_{H,1}(t,s)dr ds\\
\label{ref2}&-& \int_0^t \int_a^b \partial_s K_{H,2}(t,s)\mathds{1}_{[0,s]}(r)dr ds,
\end{eqnarray}
where $\mathbf{D}$ denotes the Gross-Sobolev-Malliavin derivative. Lastly, we observe the right-hand side of (\ref{ref2}) equals to (\ref{ref0}). This concludes the proof.
\end{proof}

\section{An $L^p$ uniform approximation for FBM in terms of a continuous-time random walk}\label{mainsection}

In this section, we present the rate of convergence of our approximation scheme w.r.t a given FBM $B_H$ with exponent $0 < H < \frac{1}{2}$. For this purpose, we make use of Theorem \ref{repTH} as follows. For a Brownian motion $B$ realizing $B_H = \Lambda_H B$ via Theorem \ref{repTH}, we construct a class of pure jump processes driven by suitable waiting times which describe its local behavior: we set $T^{k}_0:=0$ and

\begin{equation}\label{stopping_times}
T^{k}_n := \inf\{T^{k}_{n-1}< t <\infty;  |B(t) - B(T^{k}_{n-1})| = \varepsilon_k\}, \quad n \ge 1,
\end{equation}
where $\{\epsilon_k; k\ge 1\}$ is an arbitrary sequence such that $\epsilon_k \downarrow 0$ as $k\rightarrow +\infty$. The strong Markov property yields the family $(T^{k}_n)_{n\ge 0}$ is a sequence of stopping times where the increments $\{\Delta T^{k}_n:= T^k_n - T^k_{n-1} ; n\ge 1\}$ is an i.i.d sequence with the same distribution as $T^{k}_1$. By the Brownian scaling property, $\Delta T^{k}_1 = \epsilon^2_k \tau$ (in law) where $\tau = \inf\{t>0; |Y(t)|=1\}$ ($Y$ is a standard Brownian motion) is an absolutely continuous variable with mean equals one and with all finite moments (see Section 5.3.2 in \cite{milstein}). Then, we define the continuous-time random walk $A^{k}$

$$
A^{k} (t) := \sum_{n=1}^{\infty} \big(B(T^k_n) - B(T^k_{n-1}) \big) 1\!\!1_{\{T^{k}_n\leq t \}};~t\ge0.
$$
By construction

\begin{equation}\label{akb}
\sup_{t\ge 0}|A^{k}(t) - B(t)|\le \epsilon_k~a.s
\end{equation}
for every $k\ge 1$. In the sequel, we set

$$\bar{t}_k := \max\{T^k_n; T^k_n \le t\},$$
and we define
\begin{equation}\label{tmaismenos}
\bar{t}^{+}_k:= \min\{T^{k}_n ; \bar{t}_k < T^{k}_n\}\wedge T~\text{and}~\bar{t}^{-}_k:=\max\{T^{k}_n; T^{k}_n<\bar{t}_k\}\vee 0,
\end{equation}
where we set $\max \emptyset=-\infty$. By construction, $\bar{t}_k \le t < \bar{t}^{+}_k$ a.s for each $t\ge 0$. Let us define

$$B^{k}_H(t):=  \int_0^{\bar{t}_k} \partial_s K_{H,1}(\bar{t}_k,s)\big[A^{k}(\bar{t}_k) - A^{k}(\bar{s}^{+}_k)\big]ds - \int_0^{\bar{t}_k}\partial_s K_{H,2}(\bar{t}_k,s)A^{k}(s)ds; 0\le t\le T.$$
Clearly, $B_H^k$ is a pure jump process of the form

$$B^k_H(t) = \sum_{n=0}^\infty B^k_H(T^k_n)\mathds{1}_{\{T^k_n \le t < T^k_{n+1}\}}; 0\le t\le T.$$

Let us define
$$\|A^{k} - B\|_{-,\lambda}:=\sup_{0\leq s\leq \bar{t}^{-}_k<\bar{t}_k\leq t\leq T}\frac{|A^{k}(\bar{s}^{+}_k)-B(s)|}{(\bar{t}_k-s)^\lambda},
$$

$$\|A^{k} - B\|_{T^{k}_1,\lambda}:=\sup_{T^{k}_1\wedge T<s\leq T}\frac{|A^{k}(s)-B(s)|}{s^\lambda}.$$

\begin{lemma}\label{pathwiseestimate}
If $\frac{1}{2}-H < \lambda < \frac{1}{2}$ and $0 < \varepsilon < H$, then there exists a constant $C$ which only depends on $H$ such that

\begin{eqnarray*}
\|B^{k}_H - B_H\|_\infty&\le& C \|A^{k} - B\|_{-,\lambda}  T^{H-\frac{1}{2}+\lambda} + C \|B\|_\lambda (\max_{m\ge 1}\Delta T^{k}_n)^{H-\frac{1}{2}+\lambda}\mathds{1}_{\{T^{k}_n \le T\}}\\
& &\\
&+&C \left(\|B\|_\lambda\left(T^{k}_1\wedge T\right)^{\lambda+H-\frac{1}{2}} +  \|A^{k} - B\|_{T^{k}_1,\lambda}  T^{\lambda+H-\frac{1}{2}}\right)\\
& &\\
&+& \|B_H\|_{H-\varepsilon} (\max_{n\ge 1} \Delta T^{k}_n\mathds{1}_{\{T^{k}_n \le T\}})^{H-\varepsilon}~a.s,
\end{eqnarray*}
for every $k\ge 1$.
\end{lemma}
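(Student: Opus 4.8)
The plan is to combine the representation $B_H=\Lambda_H B$ from Theorem \ref{repTH} with the elementary identity $A^k(u)=B(\bar u_k)$, which follows by telescoping the defining sum of $A^k$ together with $B(0)=0$; in particular $A^k(\bar t_k)=B(\bar t_k)$, i.e. at every stopping time $A^k$ coincides with the sampled Brownian path. The first move is the split
$$
B^k_H(t) - B_H(t) = \big[B^k_H(t) - (\Lambda_H B)(\bar t_k)\big] + \big[(\Lambda_H B)(\bar t_k) - (\Lambda_H B)(t)\big].
$$
The second bracket equals $B_H(\bar t_k)-B_H(t)$, which I would bound purely by the a.s. H\"older regularity of the FBM: for any $0<\varepsilon<H$ one has $\|B_H\|_{H-\varepsilon}<\infty$ a.s., and since $\bar t_k\le t<\bar t^{+}_k$ the increment $t-\bar t_k$ is controlled by the relevant waiting time, producing the last term $\|B_H\|_{H-\varepsilon}(\max_{n\ge1}\Delta T^{k}_n\mathds{1}_{\{T^{k}_n\le T\}})^{H-\varepsilon}$.

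For the first bracket I would expand $(\Lambda_H B)(\bar t_k)$ explicitly and use $A^k(\bar t_k)=B(\bar t_k)$ to cancel the $B(\bar t_k)$ contributions, obtaining
$$
B^k_H(t) - (\Lambda_H B)(\bar t_k) = -\int_0^{\bar t_k}\partial_s K_{H,1}(\bar t_k,s)\big[A^{k}(\bar s^{+}_k)-B(s)\big]\,ds - \int_0^{\bar t_k}\partial_s K_{H,2}(\bar t_k,s)\big[A^{k}(s)-B(s)\big]\,ds =: I_1-I_2,
$$
and then estimate $|I_1|$ and $|I_2|$ by splitting each domain at the stopping times nearest the two singular ends. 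For $I_1$ I split at $\bar t^{-}_k$: on $[0,\bar t^{-}_k]$ the increment is controlled by $\|A^{k}-B\|_{-,\lambda}(\bar t_k-s)^\lambda$, so the $K_{H,1}$-estimate already contained in the proof of Lemma \ref{boundHless} gives the first term $C\|A^{k}-B\|_{-,\lambda}T^{H-\frac12+\lambda}$; on the last gap $(\bar t^{-}_k,\bar t_k)$ one has $\bar s^{+}_k=\bar t_k$, whence $A^{k}(\bar s^{+}_k)=B(\bar t_k)$ and the integrand is at most $\|B\|_\lambda(\bar t_k-s)^\lambda$ times the locally $(\bar t_k-s)^{H-\frac32}$ singularity of $\partial_s K_{H,1}$, which integrates over a gap of length $\Delta T^{k}_n$ to $C\|B\|_\lambda(\max_{n\ge1}\Delta T^{k}_n)^{H-\frac12+\lambda}\mathds{1}_{\{T^{k}_n\le T\}}$ (the second term). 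For $I_2$ I split at $T^{k}_1\wedge T$: on $[0,T^{k}_1\wedge T]$ one has $A^{k}(s)=0$, so the integrand is $\|B\|_\lambda s^\lambda$ against $\partial_s K_{H,2}$, yielding $C\|B\|_\lambda(T^{k}_1\wedge T)^{\lambda+H-\frac12}$; on $[T^{k}_1\wedge T,\bar t_k]$ the increment is $\|A^{k}-B\|_{T^{k}_1,\lambda}s^\lambda$, yielding $C\|A^{k}-B\|_{T^{k}_1,\lambda}T^{\lambda+H-\frac12}$. Taking the supremum over $t\in[0,T]$ and collecting the four contributions reproduces the claimed bound.

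The main obstacle is the kernel analysis near the two singular ends, where every integrability statement rests on the interplay $\frac12-H<\lambda<\frac12$. On the last gap the $(\bar t_k-s)^{H-\frac32}$ blow-up of $\partial_s K_{H,1}$ is integrable against $(\bar t_k-s)^\lambda$ precisely because $H-\frac32+\lambda>-1$, and near $s=0$ the $s^{H-\frac32}$ behaviour of $\partial_s K_{H,2}$ (arising from the bounded factor $\theta_H$, finite by (\ref{solveint1})) is integrable against $s^\lambda$ for the same reason, while the milder $s^{-\frac12-H}$ contribution is harmless. Some care is also needed at the first gap $n=1$, where $\bar t^{-}_k=0$ and the near-zero singularity of $\partial_s K_{H,1}$ must be absorbed into the same estimate. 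The rest is bookkeeping of the exponents $H-\frac12+\lambda$ and of the constant $C$, which depends only on $H$.
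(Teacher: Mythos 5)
Your proposal is correct and follows essentially the same route as the paper's own proof: the same initial split via the a.s.\ H\"older regularity of $B_H$ together with the cancellation $A^k(\bar t_k)=B(\bar t_k)$, the same splitting of the $K_{H,1}$-integral at $\bar t^{-}_k$ (with $\|A^k-B\|_{-,\lambda}$ on $[0,\bar t^{-}_k]$ and $\|B\|_\lambda$ controlling the last gap) and of the $K_{H,2}$-integral at $T^k_1\wedge T$, with identical exponent bookkeeping under $\frac12-H<\lambda<\frac12$. The only cosmetic difference is that on the last gap the paper also explicitly bounds the $s^{-\frac12-H}(\bar t_k-s)^{H-\frac12}$ piece of $\partial_s K_{H,1}$ via a Beta-type integral, replacing $s^{-\frac12-H}$ by $(s-\bar t^{-}_k)^{-\frac12-H}$ so that the estimate works uniformly for all gaps, a detail your sketch subsumes under the ``care is needed'' remark.
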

\begin{proof}
In the sequel, $C$ is a constant which may differ from line to line and we fix $\frac{1}{2}-H < \lambda < \frac{1}{2}, 0 < \varepsilon < H$. First of all, we observe

$$
|B^{k}_H(t) - B_H(t)|\le |B^{k}_H(t) - B_H(\bar{t}_k)| + \|B_H\|_{H-\varepsilon} (\max_{n\ge 1} \Delta T^{k}_n\mathds{1}_{\{T^{k}_n \le T\}})^{H-\varepsilon}~a.s.
$$
Then,

\begin{equation}\label{in}
\|B^{k}_H - B_H\|_\infty \le\sup_{0\le t\le T}|B^{k}_H(t) - B_H(\bar{t}_k)| + \|B_H\|_{H-\varepsilon} (\max_{n\ge 1} \Delta T^{k}_n\mathds{1}_{\{T^{k}_n\le T\}})^{H-\varepsilon}~a.s.
\end{equation}
To keep notation simple, we denote

\begin{equation*}
\begin{split}
&\varphi^k(t,s):=A^{k}(t)- A^{k}(\bar{s}^{+}_k) - (B(t) - B(s)),\quad \varphi^k(s):=A^{k}(s) - B(s)\\
&\|A^{k} - B\|_{+,\lambda}:=\sup_{\bar{t}^{-}_k< s< \bar{t}_k\leq t\leq T}\frac{|B(\bar{t}_k)-A^{k}(t)-B(s)+A^{k}(\bar{s}^{+}_k)|}{(\bar{t}_k-s)^\lambda} = \sup_{\bar{t}^{-}_k< s< \bar{t}_k\leq t\leq T}\frac{|A^{k}(\bar{s}^{+}_k)-B(s)|}{(\bar{t}_k-s)^\lambda} \\
&\|A^{k} - B\|_{T^{k}_1-,\lambda}:=\sup_{0\leq s\leq T^{k}_1\wedge T}\frac{|A^{k}(s)-B(s)|}{s^\lambda}.
\end{split}
\end{equation*}
At first, we observe $\|A^{k} - B\|_{+,\lambda}\le \|B\|_\lambda$ a.s and $\|A^{k} - B\|_{T^{k}_1-,\lambda}\le \|B\|_\lambda$ a.s for every $k\ge 1$. Furthermore, we have

\begin{eqnarray}\label{in0}
\sup_{0\le t\le T}|B^{k}_H(t) - B_H(\bar{t}_k)| &\le& \sup_{T^{k}_1\le t\le T}\int_0^{\bar{t}_k}\big| \partial_s K_{H,1}(\bar{t}_k,s)\big| \big|\varphi^k(\bar{t}_k,s)\big|ds\\
\nonumber&+& \sup_{T^{k}_1\le t\le T}\int_0^{\bar{t}_k} \big|\partial_s K_{H,2}(\bar{t}_k,s)\big|\big|\varphi^k(s)\big|ds~a.s.
\end{eqnarray}

We observe
\begin{eqnarray*}
\int_0^{\bar{t}_k}|\partial_s K_{H,1}(\bar{t}_k,s) |\varphi^k(\bar{t}_k,s)|ds&\le& C \|A^{k} - B\|_{-,\lambda}  (\bar{t}_k)^{H-\frac{1}{2}} \int_0^{\bar{t}^{-}_k}s^{-\frac{1}{2}-H}(\bar{t}_k-s)^{H-\frac{1}{2}+\lambda}ds\\
 & &\\
 &+& C \|A^{k} - B\|_{+,\lambda} (\bar{t}_k)^{H-\frac{1}{2}} \int_{\bar{t}^{-}_k}^{\bar{t}_k}s^{-\frac{1}{2}-H}(\bar{t}_k-s)^{H-\frac{1}{2}+\lambda}ds\\
 & &\\
 &+& C \|A^{k} - B\|_{-,\lambda} (\bar{t}_k)^{H-\frac{1}{2}} \int_0^{\bar{t}^{-}_k}s^{\frac{1}{2}-H}(\bar{t}_k-s)^{H-\frac{3}{2}+\lambda}ds\\
 & &\\
 &+&C \|A^{k} - B\|_{+,\lambda} (\bar{t}_k)^{H-\frac{1}{2}} \int_{\bar{t}^{-}_k}^{\bar{t}_k}s^{\frac{1}{2}-H}(\bar{t}_k-s)^{H-\frac{3}{2}+\lambda}ds\\
 & &\\
 &=:& I^k_1(\bar{t}_k,s) + I^k_2(\bar{t}_k,s) + I^k_3(\bar{t}_k,s) + I^k_4(\bar{t}_k,s)~a.s.
\end{eqnarray*}
The following estimates hold true a.s

\begin{eqnarray}\label{in1}
\nonumber I^k_1(\bar{t}_k,s)&\le& C\|A^{k} - B\|_{-,\lambda} (\bar{t}_k)^{H-\frac{1}{2}} \int_0^{\bar{t}_k}s^{-\frac{1}{2}-H}(\bar{t}_k-s)^{H-\frac{1}{2}+\lambda}ds\\
&\le& C \|A^{k} - B\|_{-,\lambda} T^{H-\frac{1}{2}+\lambda},
\end{eqnarray}

\begin{equation}\label{in2}
I^k_3(\bar{t}_k,s)\le C\|A^{k} - B\|_{-,\lambda} (\bar{t}_k)^{H-\frac{1}{2}} \int_0^{\bar{t}_k}s^{\frac{1}{2}-H}(\bar{t}_k-s)^{H-\frac{3}{2}+\lambda}ds \le C\|A^{k} - B\|_{-,\lambda} T^{H-\frac{1}{2}+\lambda},
\end{equation}
and
\begin{eqnarray}
\nonumber I^k_2(\bar{t}_k,s) + I^k_4(\bar{t}_k,s)&\le& C \|A^{k} - B\|_{+,\lambda} (\bar{t}_k)^{H-\frac{1}{2}}\Bigg( \nonumber\int_{\bar{t}^{-}_k}^{\bar{t}_k}s^{\frac{1}{2}-H}(\bar{t}_k-s)^{H-\frac{3}{2}+\lambda}ds\\
\nonumber & &\\
\nonumber &+&\int_{\bar{t}^{-}_k}^{\bar{t}_k} (s-\bar{t}^{-}_k)^{-\frac{1}{2}-H}(\bar{t}_k-s)^{H-\frac{1}{2}+\lambda}ds\Bigg)\\
\nonumber& &\\
\nonumber&=& C \|A^{k} - B\|_{+,\lambda} (\bar{t}_k)^{H-\frac{1}{2}} \Big( (\bar{t}_k)^{\frac{1}{2}-H} (\Delta \bar{t}_k)^{H-\frac{1}{2}+\lambda} + (\Delta\bar{t}_k)^{\frac{1}{2}-H} (\Delta \bar{t}_k)^{H-\frac{1}{2}+\lambda} \Big)\\
\label{in3}&\le& C \|B\|_\lambda (\max_{m\ge 1}\Delta T^{k}_n)^{H-\frac{1}{2}+\lambda}\mathds{1}_{\{T^{k}_n \le T\}},
\end{eqnarray}
where $\Delta \bar{t}_k:= \bar{t}_k - \bar{t}^{-}_k$. Summing up (\ref{in1}), (\ref{in2}) and (\ref{in3}), we arrive at the following estimate

\begin{equation}\label{in4}
\sup_{T^{k}_1\le t\le T}\int_0^{\bar{t}_k}|\partial_s K_{H,1}(\bar{t}_k,s) |\varphi^k(\bar{t}_k,s)|ds\le C\|A^{k} - B\|_{-,\lambda} T^{H-\frac{1}{2}+\lambda} + C \|B\|_\lambda (\max_{m\ge 1}\Delta T^{k}_n)^{H-\frac{1}{2}+\lambda}\mathds{1}_{\{T^{k}_n \le T\}},
\end{equation}
almost surely for every $k\ge 1$. Let us now estimate the second term in the right-hand side of (\ref{in0}). At first, we notice

\begin{equation}\label{in5}
\begin{split}
\sup_{T^{k}_1\leq t\leq T}\int_0^{\bar{t}_k} \big|\partial_s K_{H,2}(\bar{t}_k,s)\big| |\varphi^k(s)|ds &\leq C \sup_{T^{k}_1\leq t\leq T}(\bar{t}_k)^{H-\frac{1}{2}}\int_0^{\bar{t}_k}s^{-\frac{1}{2}-H}(\bar{t}_k-s)^{H-\frac{1}{2}}|\varphi^k(s)|ds\\
&+ C \sup_{T^{k}_1\leq t\leq T}\int_0^{\bar{t}_k}s^{-\frac{1}{2}-H}\int_s^t u^{H-\frac{3}{2}}(u-s)^{H-\frac{1}{2}}du|\varphi^k(s)|ds\\
&=:J_{k,1} + J_{k,2}~a.s,
\end{split}
\end{equation}
where

\begin{equation}\label{in6}
\begin{split}
J_{k,1}
&\leq \sup_{T^{k}_1\leq t\leq T}\Big[(\bar{t}_k)^{H-\frac{1}{2}}\|A^{k} - B\|_{T^{k}_1-,\lambda}\int_0^{T^{k}_1\wedge T}s^{\lambda-\frac{1}{2}-H}(\bar{t}_k-s)^{H-\frac{1}{2}}ds\Big]\\
&+\sup_{T^{k}_1\leq t\leq T} \Big[(\bar{t}_k)^{H-\frac{1}{2}}\|A^{k} - B\|_{T^{k}_1,\lambda} \int_{T^{k}_1\wedge T}^{\bar{t}_k}(s-T^{k}_1)^{\lambda-\frac{1}{2}-H}(\bar{t}_k-s)^{H-\frac{1}{2}}ds\Big]\\
&\leq  \sup_{T^{k}_1\leq t\leq T}(\bar{t}_k)^{H-\frac{1}{2}}\|A^{k} - B\|_{T^{k}_1-,\lambda}\left(T^{k}_1\wedge T\right)^\lambda +\sup_{T^{k}_1\leq t\leq T}\Big[(\bar{t}_k)^{H-\frac{1}{2}}\left(\bar{t}_k -T^{k}_1\wedge T \right)^\lambda\|A^{k} - B\|_{T^{k}_1,\lambda}\Big] \\
&\leq C \Big(\|A^{k} - B\|_{T^{k}_1-,\lambda} \left(T^{k}_1\wedge T\right)^{\lambda+H-\frac{1}{2}}+\|A^{k} - B\|_{T^{k}_1,\lambda}T^{\lambda+H-\frac{1}{2}}\Big)~a.s.
\end{split}
\end{equation}

By using (\ref{solveint}) and (\ref{solveint1}), we have

$$\int_s^t u^{H-\frac{3}{2}}(u-s)^{H-\frac{1}{2}}du \le s^{2H-1} \sup_{0 < r < x\le T}\theta_H(x,r) < \infty$$
for every $(s,t); 0 < s < t\le T $. Then, there exists a constant $C$ which depends on $H$ such that

\begin{eqnarray}
\nonumber J_{k,2} &\leq& C \left(\|A^{k} - B\|_{T^{k}_1-,\lambda}\int_0^{T^{k}_1\wedge T}s^{\lambda+H-\frac{3}{2}}ds + \|A^{k} - B\|_{T^{k}_1,\lambda}\sup_{T^{k}_1\leq t\leq T}\int_{T^{k}_1\wedge T}^{\bar{t}_k\wedge T}s^{\lambda+H-\frac{3}{2}}ds\right)\\
\label{in7}& &\\
\nonumber&\leq& C \left(\|A^{k} - B\|_{T^{k}_1-,\lambda}\left(T^{k}_1\wedge T\right)^{\lambda+H-\frac{1}{2}} + \|A^{k} -  B\|_{T^{k}_1,\lambda}\sup_{T^{k}_1\leq t\leq T}\left(\bar{t}_k -T^{k}_1\wedge T \right)^{\lambda+H-\frac{1}{2}}\right)~a.s.
\end{eqnarray}
By the estimates (\ref{in5}), (\ref{in6}) and (\ref{in7}) and using the fact $\|A^{k} - B\|_{T^{k}_1-,\lambda}\le \|B\|_\lambda$ a.s for every $k\ge 1$, we arrive at the inequality

\begin{equation}\label{i8}
\sup_{T^{k}_1\leq t\leq T}\int_0^{\bar{t}_k} \big|\partial_s K_{H,2}(\bar{t}_k,s)\big| |\varphi^k(s)|ds\le C \Big(\|B\|_{\lambda} \left(T^{k}_1\wedge T\right)^{\lambda+H-\frac{1}{2}}+\|A^{k} - B\|_{T^{k}_1,\lambda} T^{\lambda+H-\frac{1}{2}}\Big)~a.s.
\end{equation}
 Summing up (\ref{in}), (\ref{in0}), (\ref{i8}) and (\ref{in4}), we conclude the proof.
\end{proof}

In order to establish the main result of this paper (namely Theorem \ref{FBMappHless0.5}), we make a fundamental use of Lemma 2.2 in \cite{LEAO_OHASHI2017.1} but in a slightly different way. In \cite{LEAO_OHASHI2017.1}, the authors establish an upper bound

\begin{equation}\label{uppermesh}
\mathbb{E}\Big| \max_{n\ge 1} \Delta T^k_n\Big|^q\mathds{1}_{\{T^k_n \le T\}}\lesssim \epsilon^{2q}_k \lceil \epsilon_k^{-2}T\rceil^{1-\alpha};~k\ge 1,
\end{equation}
where $q \ge 1$, $\lceil \cdot \rceil$ denotes the ceiling function and $\alpha\in (0,1)$ is a constant. Here, we will establish an upper bound for
\begin{equation}\label{uppermesh1}
\mathbb{E}\Big| \max_{n\ge 1} \frac{1}{\Delta T^k_n}\Big|^q\mathds{1}_{\{T^k_n \le T\}}; k\ge 1,
\end{equation}
which obviously blows up as $k\rightarrow +\infty$. In the present context, we need to know precisely how fast (\ref{uppermesh1}) blows up. Since $\max_{n\ge 1}(\Delta T^k_n)^{-1}\mathds{1}_{\{T^k_n\le T\}}$ is an unbounded sequence, the proof of Lemma 2.2 in \cite{LEAO_OHASHI2017.1} does not apply directly to the case (\ref{uppermesh1}). In the sequel, we give the details of the obtention of the upper bound for (\ref{uppermesh1}). At first, we recall the following elementary result.

\begin{lemma}\label{aux}
Let $Z_1, \ldots, Z_n$ be a sequence of positive random variables on a probability space. Then, for every $q\ge 1, r>1$, we have

$$\mathbb{E}\big(\max_{m\le i\le n}Z_i\big)^q \le \Bigg\{ \sum_{\ell=m}^n \mathbb{E}|Z_\ell|^{qr} \Bigg\}^{\frac{1}{r}}.$$

\end{lemma}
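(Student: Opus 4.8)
The plan is to reduce the statement to two elementary facts: a deterministic comparison of the maximum with an $\ell^r$-type sum, and Jensen's inequality for the concave map $x\mapsto x^{1/r}$. There is no real analytic difficulty here, so the proof is essentially the plan itself; the only points needing attention are the direction of Jensen's inequality and the harmless possibility of an infinite right-hand side.

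First I would use monotonicity of $t\mapsto t^q$ on $[0,\infty)$ together with positivity of the $Z_i$ to pull the power inside the maximum, writing $\big(\max_{m\le i\le n}Z_i\big)^q=\max_{m\le i\le n}Z_i^q$ pointwise. Next, for any nonnegative reals $a_m,\dots,a_n$ and any $r>1$ one has the deterministic bound $\max_{m\le i\le n}a_i\le\big(\sum_{\ell=m}^n a_\ell^{r}\big)^{1/r}$, simply because each single term $a_j^{r}$ is dominated by the whole sum. Applying this with $a_i=Z_i^{q}$ yields the pointwise inequality $\max_{m\le i\le n}Z_i^{q}\le\big(\sum_{\ell=m}^n Z_\ell^{qr}\big)^{1/r}$.

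Taking expectations and then invoking Jensen's inequality for the concave function $x\mapsto x^{1/r}$ (concavity being exactly where the hypothesis $r>1$ enters) lets me move the expectation inside the exponent:
\[
\mathbb{E}\big(\max_{m\le i\le n}Z_i\big)^q\le\mathbb{E}\Big(\sum_{\ell=m}^n Z_\ell^{qr}\Big)^{1/r}\le\Big(\sum_{\ell=m}^n\mathbb{E}\,Z_\ell^{qr}\Big)^{1/r},
\]
which is precisely the claimed estimate (the $|Z_\ell|$ in the statement equals $Z_\ell$ since the variables are positive). Finiteness causes no trouble: if some $\mathbb{E}\,Z_\ell^{qr}=+\infty$ the bound is trivial, and otherwise all quantities are integrable and Jensen applies. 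An equivalent route I might present instead is to bound the $L^{qr}$-norm directly, observing $\mathbb{E}\big(\max_i Z_i\big)^{qr}=\mathbb{E}\,\max_i Z_i^{qr}\le\sum_\ell\mathbb{E}\,Z_\ell^{qr}$ and then passing from the $qr$-th moment to the $q$-th moment by a single application of Jensen to $x\mapsto x^{1/r}$; this gives the same conclusion.
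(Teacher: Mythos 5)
Your proof is correct and rests on essentially the same two ingredients as the paper's: the paper writes $\mathbb{E}\big(\max_i Z_i\big)^q=\mathbb{E}\big\{\max_i |Z_i|^{qr}\big\}^{1/r}$, applies Jensen's inequality for the concave map $x\mapsto x^{1/r}$ (which it labels H\"older's inequality), and only then bounds the maximum by the sum inside the expectation --- the same two steps you perform, merely in the opposite (pointwise-first) order, and the order is immaterial. Indeed, the \emph{alternative} route you sketch at the end coincides exactly with the paper's argument, so this counts as the same approach.
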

\begin{proof}
Just apply H\"older's inequality as follows

\begin{eqnarray*}
\mathbb{E}\big(\max_{m\le i\le n}Z_i\big)^q  &=& \mathbb{E}\max_{m\le i\le n}|Z_i|^q  = \mathbb{E}\Big\{\max_{m\le i\le n}|Z_i|^{qr}\Big\}^{\frac{1}{r}}\\
&\le& \Big\{\mathbb{E}\max_{m\le i\le n}|Z_i|^{qr}\Big\}^{\frac{1}{r}}\\
&\le& \Big\{\mathbb{E}\sum_{i=m}^n|Z_i|^{qr}\Big\}^{\frac{1}{r}}.
\end{eqnarray*}
\end{proof}

\begin{lemma}\label{meshlemma}
For every $q\ge 1$ and $\alpha \in (0,1)$, there exists a constant $C$ which depends on $q$ and $\alpha$ such that

$$\mathbb{E}\Big| \max_{n\ge 1} \frac{1}{\Delta T^k_n}\Big|^q\mathds{1}_{\{T^k_n \le T\}} \le C \epsilon_k^{-2q}\lceil \epsilon_k^{-2}T\rceil^{1-\alpha},$$
for every $k\ge 1$.
\end{lemma}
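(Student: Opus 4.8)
The plan is to apply the elementary maximal inequality of Lemma \ref{aux} to the nonnegative sequence
$Z_n := (\Delta T^k_n)^{-1}\mathds{1}_{\{T^k_n \le T\}}$, and to exploit the fact that the constraint $\{T^k_n\le T\}$ can be relaxed to $\{T^k_{n-1}\le T\}$ in order to decouple the $n$-th increment $\Delta T^k_n$ from the constraint determined by the previous ones. First I would note that almost surely only finitely many $Z_n$ are nonzero (exactly those indices with $T^k_n\le T$), so $\max_{n\ge 1}Z_n$ is a.s. finite and is the increasing limit of $\max_{1\le n\le N}Z_n$ as $N\to\infty$. Hence, by monotone convergence and Lemma \ref{aux} (applied with $m=1$, exponent $q$, and a parameter $r>1$ to be fixed), one gets
$$\mathbb{E}\Big(\max_{n\ge 1}Z_n\Big)^q \le \Big\{\sum_{n=1}^\infty \mathbb{E}[Z_n^{qr}]\Big\}^{1/r}.$$

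The key step is the decoupling of each summand. Since $\Delta T^k_n\ge 0$ gives $T^k_{n-1}\le T^k_n$, we have $\{T^k_n\le T\}\subseteq\{T^k_{n-1}\le T\}$, and since $\Delta T^k_n$ is independent of $T^k_{n-1}=\sum_{j=1}^{n-1}\Delta T^k_j$,
$$\mathbb{E}[Z_n^{qr}] = \mathbb{E}\big[(\Delta T^k_n)^{-qr}\mathds{1}_{\{T^k_n\le T\}}\big]\le \mathbb{E}\big[(\Delta T^k_n)^{-qr}\big]\,\mathbb{P}(T^k_{n-1}\le T).$$
By the scaling identity $\Delta T^k_n = \epsilon_k^2\tau$ (in law) recorded before (\ref{stopping_times}), we have $\mathbb{E}[(\Delta T^k_n)^{-qr}]=\epsilon_k^{-2qr}\mathbb{E}[\tau^{-qr}]$. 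Here I would verify $\mathbb{E}[\tau^{-qr}]<\infty$: by the reflection principle $\mathbb{P}(\tau\le t)\le Ce^{-1/(2t)}$ as $t\downarrow 0$, so $\tau$ has finite moments of every negative order, and in particular $\mathbb{E}[\tau^{-qr}]=:c_{q,r}<\infty$ for each fixed $r$.

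Finally, I would recognize the remaining sum of probabilities as a renewal function. Since $T^k_m = \epsilon_k^2 S_m$ (in law) with $S_m=\sum_{j=1}^m\tau_j$ a mean-one renewal process,
$$\sum_{n=1}^\infty \mathbb{P}(T^k_{n-1}\le T)=\sum_{m=0}^\infty \mathbb{P}(S_m\le \epsilon_k^{-2}T)=U(\epsilon_k^{-2}T),$$
and Lorden's inequality combined with Wald's identity (or, alternatively, a direct Chernoff bound on the lower tail of $S_m$) yields $U(t)\le t+\mathbb{E}[\tau^2]$; since $\tau$ has all positive moments this gives $U(\epsilon_k^{-2}T)\le C\lceil \epsilon_k^{-2}T\rceil$. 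Collecting the three bounds,
$$\mathbb{E}\Big(\max_{n\ge 1}Z_n\Big)^q \le \big\{\epsilon_k^{-2qr}\,c_{q,r}\,C\lceil\epsilon_k^{-2}T\rceil\big\}^{1/r}=C'\,\epsilon_k^{-2q}\lceil\epsilon_k^{-2}T\rceil^{1/r},$$
and choosing $r=\tfrac{1}{1-\alpha}>1$ makes $1/r=1-\alpha$, producing exactly the claimed rate.

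I expect the main obstacle to be getting the exponent of $\lceil\epsilon_k^{-2}T\rceil$ right: the decoupling must be tight enough that the cost attached to each admissible index reduces to the pure constant $\epsilon_k^{-2qr}c_{q,r}$, so that the only further $k$-dependence is the renewal count $U(\epsilon_k^{-2}T)\asymp \epsilon_k^{-2}T$ raised to the power $1/r$. What ultimately makes the whole scheme work is that $\mathbb{E}[\tau^{-qr}]<\infty$ for every $r$, so that $r$ may be taken as large as $1/(1-\alpha)$; this in turn rests on the super-exponential smallness of the exit-time density near the origin, which is precisely the feature that was absent in the $\max\Delta T^k_n$ estimate (\ref{uppermesh}) and which forces the present, slightly different argument.
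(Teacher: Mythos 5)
Your proof is correct, and it takes a genuinely different route from the paper's. The paper splits the random-index maximum at the deterministic threshold $\lceil \epsilon_k^{-2}T\rceil$ and controls the overshoot region by conditioning on the number of jumps $N^k_T$: it applies Lemma \ref{aux} under the conditional measures $\mathbb{P}[\,\cdot\,|N^k_T=i]$, which requires the conditional identical-distribution property \eqref{appl1} and the conditional H\"older manipulations in \eqref{ref3} and \eqref{ref8}, and then sums the resulting weights $(\mathbb{P}\{N^k_T=2\lceil \epsilon_k^{-2}T\rceil+j\})^{\alpha}$ by the Cram\'er large-deviation estimate \eqref{ref9}--\eqref{ref10}. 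You instead apply Lemma \ref{aux} a single time, with $r=\frac{1}{1-\alpha}$, to the full sequence $Z_n=(\Delta T^k_n)^{-1}\mathds{1}_{\{T^k_n\le T\}}$ (the passage to infinitely many terms by monotone convergence is legitimate, since $N^k_T<\infty$ a.s.\ so the partial maxima increase to the maximum), and you replace all conditioning by the term-by-term decoupling $\mathds{1}_{\{T^k_n\le T\}}\le \mathds{1}_{\{T^k_{n-1}\le T\}}$ together with the independence of $\Delta T^k_n$ from $T^k_{n-1}$, which is exactly what the i.i.d.\ structure of the increments provides; the surviving sum $\sum_{n\ge 1}\mathbb{P}(T^k_{n-1}\le T)$ is the renewal function $U(\epsilon_k^{-2}T)$ of the mean-one renewal process $S_m=\sum_{j\le m}\tau_j$, bounded by $C\lceil \epsilon_k^{-2}T\rceil$ via Lorden's inequality or, equivalently, the same Chernoff lower-tail bound the paper uses in \eqref{ref9}, now needed only at this one spot. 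Both arguments rest on the same two ingredients, namely $\mathbb{E}[\tau^{-q/(1-\alpha)}]<\infty$ (the paper cites \cite{Burq_Jones2008}; your reflection-principle bound $\mathbb{P}(\tau\le t)\le 4e^{-1/(2t)}$ is a valid substitute) and H\"older with exponent $\frac{1}{1-\alpha}$, and both yield the claimed rate $\epsilon_k^{-2q}\lceil \epsilon_k^{-2}T\rceil^{1-\alpha}$ with a constant depending only on $q$ and $\alpha$. What your version buys is the complete elimination of the conditional-law bookkeeping, which is the delicate part of the paper's proof (conditioning on $\{N^k_T=n\}$ changes the joint law of the increments, so \eqref{appl1} needs justification via exchangeability), traded for a one-line independence argument plus a standard renewal-theory bound; the paper's version, conversely, keeps the estimate localized on the event $\{N^k_T>2\lceil \epsilon_k^{-2}T\rceil\}$, making the role of the exponential decay of the jump-count tail explicit, at the cost of a longer case analysis.
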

\begin{proof}
Let $\tau = \inf\{t>0; |B_t|=1\}$. We recall (see e.g Lemmas 2 and 3 in \cite{Burq_Jones2008}) that, for every $q>0$, we have $
\mathbb{E} \tau^{-q} < \infty$ and $\mathbb{E}[\tau] = 1$. Let $N^k_T$ be the number of jumps of $A^k$ along the time interval $[0,T]$. In other words,

$$N^k_T = \sum_{n\ge 1}\mathds{1}_{\{T^k_n \le T\}}.$$
We observe $N^k_T = \epsilon^{-2}_k [A^k, A^k]_T$, where $[\cdot, \cdot]_T$ denotes the quadratic variation of the martingale $A^k$ computed w.r.t its own filtration. Moreover, $\{N^k_T = n\} = \{T^k_n < T < T^k_{n+1}\}~a.s$ for every $k,n\ge 1$. Recall
 $\{\Delta T^k_i; i\ge 1\}$ is an iid sequence with absolutely continuous distribution and $\Delta T^k_1 =\epsilon_k^2 \tau$ (in law). Since $T^k_n = \sum_{j=1}^n \Delta T^k_j, T^k_{n+1} = T^k_n + \Delta T^k_{n+1}$ and $\Delta T^k_{n+1}$ is independent from $T^k_n$, then $\{N^k_T = n\}$ has strictly positive probability for every $k,n\ge 1$. Let $\mathbb{E}_{k,n}$ be the expectation computed w.r.t the probability measure $\mathbb{P}[\cdot | N^k_T = n]$ for integers $n,k\ge 1$. We observe $\Delta T^k_i; 1\le i\le n$ is an identically distributed sequence conditioned on the event $\{N^k_T = n\}$, i.e.,

\begin{eqnarray}\label{appl1}
\nonumber\mathbb{P} \big\{ \Delta T^k_i \in dx | N^k_T = n\big\} &=& \mathbb{P} \big\{ \Delta T^k_1 \in dx | N^k_T = n\big\}\\
&=& \mathbb{P}\{\epsilon^{2}_k \tau \in dx| N^k_T = n\}
\end{eqnarray}
for each $i\in \{1, \ldots, n\}$.

By Lemma \ref{aux}, we know that for a given $\alpha \in (0,1)$ and $1\le m < n$

\begin{equation}\label{without1}
\mathbb{E}\max_{m\le i\le n} \Big(\frac{1}{\Delta T^k_i}\Big)^{q} \le \Bigg\{\sum_{i=m}^n \mathbb{E}\Big( \frac{1}{\Delta T^k_i}\Big)^{\frac{q}{1-\alpha}} \Bigg\}^{1-\alpha} = \epsilon^{-2q}_k (n-m+1)^{1-\alpha}(\mathbb{E}[\tau^{\frac{-q}{1-\alpha}}])^{1-\alpha}.
\end{equation}
In addition, property (\ref{appl1}) and Lemma \ref{aux} yield

\begin{eqnarray}\label{ref3}
\mathbb{E}\Big[ \Big|\max_{m\le i \le n} \frac{1}{\Delta T^k_i}\Big|^q \Big| N^k_T=n\Big] &=& \mathbb{E}_{k,n}\Big|\max_{m\le i \le n} \frac{1}{\Delta T^k_i}\Big|^q\le \Bigg\{\sum_{i=m}^n \mathbb{E}_{k,n}\Big( \frac{1}{\Delta T^k_i}\Big)^{\frac{q}{1-\alpha}} \Bigg\}^{1-\alpha} \\
\nonumber&\le& \Bigg( \mathbb{E}_{k,n}\big[ (T_1^k)^{\frac{-q}{1-\alpha}}\big]\Bigg)^{1-\alpha} (n-m+1)^{1-\alpha}\\
\nonumber&=& \epsilon^{-2q}_k\big(\mathbb{E}_{k,n}[\tau^{\frac{-q}{1-\alpha}}]\big)^{1-\alpha}(n-m+1)^{1-\alpha}.
\end{eqnarray}
By (\ref{without1}), we get

\begin{equation}\label{ref5}
\mathbb{E} \Big|\max_{1\le m \le \lceil \epsilon^{-2}_k T\rceil } \frac{1}{\Delta T^k_m}\Big|^q \le\epsilon^{-2q}_k\big(\mathbb{E}[\tau^{\frac{-q}{1-\alpha}}]\big)^{1-\alpha}\lceil \epsilon^{-2}_k T\rceil^{1-\alpha},
\end{equation}
for every $k\ge 1$. Then, (\ref{ref5}) yields

\begin{eqnarray}\label{ref6}
\nonumber\mathbb{E}\Big| \max_{m\ge 1} \frac{1}{\Delta T^k_m}\Big|^q\mathds{1}_{\{T^k_m \le T\}} &\le& \mathbb{E}\max \Bigg\{ \Big|\max_{1\le m \le \lceil \epsilon^{-2}_k T\rceil} \frac{1}{\Delta T^k_m}\Big|^q, \Big|\max_{\lceil \epsilon^{-2}_kT \rceil \le m \le \lceil \epsilon^{-2}_k T\rceil \vee N^k_T } \frac{1}{\Delta T^k_m}\Big|^q\Bigg\}\\
\nonumber&\le& \mathbb{E}\Big|\max_{1\le m \le \lceil \epsilon^{-2}_k T\rceil} \frac{1}{\Delta T^k_m}\Big|^q + \mathbb{E}\Big|\max_{\lceil \epsilon^{-2}_kT \rceil\le m \le \lceil \epsilon^{-2}_k T\rceil \vee N^k_T } \frac{1}{\Delta T^k_m}\Big|^q\\
\nonumber&\le& \epsilon^{-2q}_k\big(\mathbb{E}[\tau^{\frac{-q}{1-\alpha}}]\big)^{1-\alpha}\lceil \epsilon^{-2}_k T\rceil^{1-\alpha}\\
&+& \mathbb{E}\Big|\max_{\lceil \epsilon^{-2}_kT \rceil\le m \le \lceil \epsilon^{-2}_k T\rceil \vee N^k_T } \frac{1}{\Delta T^k_m}\Big|^q,
\end{eqnarray}
for every $k\ge 1$. By using (\ref{ref5}) again, we observe

\begin{eqnarray}
\nonumber\mathbb{E}\Big|\max_{\lceil \epsilon^{-2}_kT \rceil\le m \le \lceil \epsilon^{-2}_k T\rceil \vee N^k_T } \frac{1}{\Delta T^k_m}\Big|^q&=& \int_{\{N^k_T \le  \lceil \epsilon^{-2}_kT \rceil\} }\Big|\max_{\lceil \epsilon^{-2}_kT \rceil\le m \le \lceil \epsilon^{-2}_k T\rceil \vee N^k_T } \frac{1}{\Delta T^k_m}\Big|^q d\mathbb{P}\\
\nonumber&+& \int_{\{\lceil \epsilon^{-2}_kT \rceil < N^k_T \le 2\lceil \epsilon^{-2}_kT \rceil\} }\Big|\max_{\lceil \epsilon^{-2}_kT \rceil\le m \le \lceil \epsilon^{-2}_k T\rceil \vee N^k_T } \frac{1}{\Delta T^k_m}\Big|^q d\mathbb{P}\\
\nonumber&+& \int_{\{N^k_T > 2\lceil \epsilon^{-2}_kT \rceil\} }\Big|\max_{\lceil \epsilon^{-2}_kT \rceil\le m \le \lceil \epsilon^{-2}_k T\rceil \vee N^k_T } \frac{1}{\Delta T^k_m}\Big|^q d\mathbb{P}\\
\nonumber&\le& \epsilon^{-2q}_k\big(\mathbb{E}[\tau^{\frac{-q}{1-\alpha}}]\big)^{1-\alpha}\lceil \epsilon^{-2}_k T\rceil^{1-\alpha} + \mathbb{E}\Big|\max_{\lceil \epsilon^{-2}_kT \rceil\le m \le 2 \lceil \epsilon^{-2}_k T\rceil } \frac{1}{\Delta T^k_m}\Big|^q\\
\nonumber&+&\int_{\{N^k_T > 2\lceil \epsilon^{-2}_kT \rceil\} }\Big|\max_{\lceil \epsilon^{-2}_kT \rceil\le m \le \lceil \epsilon^{-2}_k T\rceil \vee N^k_T } \frac{1}{\Delta T^k_m}\Big|^qd\mathbb{P}\\
\nonumber&\le& 2 \epsilon^{-2q}_k\big(\mathbb{E}[\tau^{\frac{-q}{1-\alpha}}]\big)^{1-\alpha}\lceil \epsilon^{-2}_k T\rceil^{1-\alpha}\\
\label{ref7}&+& \int_{\{N^k_T > 2\lceil \epsilon^{-2}_kT \rceil\} }\Big|\max_{\lceil \epsilon^{-2}_kT \rceil\le m \le \lceil \epsilon^{-2}_k T\rceil \vee N^k_T } \frac{1}{\Delta T^k_m}\Big|^qd \mathbb{P},
\end{eqnarray}
for every $k\ge 1$. By (\ref{ref3}), we observe

$$\int_{N^k_T > 2\lceil \epsilon^{-2}_kT \rceil }\Big|\max_{\lceil \epsilon^{-2}_kT \rceil\le m \le \lceil \epsilon^{-2}_k T\rceil \vee N^k_T } \frac{1}{\Delta T^k_m}\Big|^qd \mathbb{P}
  $$
  $$= \int_{2\lceil \epsilon^{-2}_k T\rceil+1} \mathbb{E}\Big[ \Big|\max_{ \lceil \epsilon^{-2}_k T\rceil\le m \le \lceil \epsilon^{-2}_k T\rceil\vee i} \frac{1}{\Delta T^k_m}\Big|^q\ \Big| N^k_T=i\Big]d\mathbb{P}_{N^k_T}(di)$$
$$\le  \epsilon^{-2q}_k\sum_{i\ge 2\lceil \epsilon^{-2}_k T\rceil+1} \Bigg( \mathbb{E}_{k,i}\big[ \tau^{\frac{-q}{1-\alpha}}\big]\Bigg)^{1-\alpha} (i-\lceil \epsilon^{-2}_k T\rceil+1)^{1-\alpha} \mathbb{P}\{N^k_T = i\}$$
$$= \epsilon^{-2q}_k\sum_{i\ge 2\lceil \epsilon^{-2}_k T\rceil+1} \Bigg( \int_0^\infty x^{\frac{-q}{1-\alpha}}\mathbb{P}\{ \tau\in dx, N^k_T = i\}\Bigg)^{1-\alpha}  (i-\lceil \epsilon^{-2}_k T\rceil+1)^{1-\alpha} (\mathbb{P}\{N^k_T = i\})^{\alpha} $$

\begin{equation}\label{ref8}
\le \epsilon^{-2q}_k  (\mathbb{E}[\tau^{\frac{-q}{1-\alpha}}])^{1-\alpha}  \sum_{j\ge 1}   (\lceil \epsilon^{-2}_k T\rceil+j+1)^{1-\alpha} (\mathbb{P}\{N^k_T = 2\lceil \epsilon^{-2}_k T\rceil+j\})^{\alpha},
\end{equation}
for every $k\ge 1$. Now, by applying a standard Large Deviation estimate, we know that

\begin{eqnarray}
\nonumber\mathbb{P}\{N^k_T = 2\lceil \epsilon^{-2}_k T\rceil+j\}&\le& \mathbb{P}\big\{N^k_T \ge  2\lceil \epsilon^{-2}_k T\rceil+j\}\\
\nonumber&=& \mathbb{P}\{T^k_{2\lceil \epsilon^{-2}_k T\rceil+j} \le T\big\}\\
\nonumber &=& \mathbb{P}\Bigg\{\frac{1}{2\lceil \epsilon^{-2}_k T\rceil +j} \sum_{i=1}^{2\lceil \epsilon^{-2}_k T\rceil +j} \tau_i\le \frac{T}{\epsilon^2_k (2\lceil \epsilon^{-2}_k T\rceil +j)}\Bigg\}\\
\nonumber &\le&  \mathbb{P}\Bigg\{\frac{1}{2\lceil \epsilon^{-2}_k T\rceil +j} \sum_{i=1}^{2\lceil \epsilon^{-2}_k T\rceil +j} \tau_i\le \frac{T}{\epsilon^2_k2\lceil \epsilon^{-2}_k T\rceil}\Bigg\}\\
\nonumber &\le& \mathbb{P}\Bigg\{\frac{1}{2\lceil \epsilon^{-2}_k T\rceil +j} \sum_{i=1}^{2\lceil \epsilon^{-2}_k T\rceil +j} \tau_i\le \frac{1}{2}\Bigg\}\\
\label{ref9}&\le& \exp \Big( -( 2\lceil \epsilon^{-2}_k T\rceil+j) w(0.5) \Big),
\end{eqnarray}
where $(\tau_i)_{i=1}^{\infty}$ is an iid sequence such that $\tau_1= \tau$ (in law),  $w(0.5)$ is the Cramer transform of $\tau$ evaluated at the point $0.5$. For each $\alpha \in (0,1)$, there exists $C = C(\alpha)$ such that

\begin{equation}\label{ref10}
\sum_{j=1}^\infty \exp \Big( -\alpha ( 2\lceil \epsilon^{-2}_k T\rceil+j) w(0.5) \Big)j^{1-\alpha}\le C,
\end{equation}
for every $k\ge 1$. For instance, notice that

$$\exp \Big( -\alpha ( 2\lceil \epsilon^{-2}_k T\rceil+j) w(0.5) \Big)\le \frac{1}{j^2} \alpha^{-2},$$
for every $k,j\ge 1$. Summing up (\ref{ref6}), (\ref{ref7}), (\ref{ref8}), (\ref{ref9}) and (\ref{ref10}), we conclude the proof.
\end{proof}

\begin{lemma}\label{touches1}
Fix $p\ge 1$. For each pair $(\lambda, \delta)$ satisfying $0 < \lambda < \frac{1}{2} + \frac{2\delta -2}{2p}, \max\{0,1-\frac{p}{2}\} < \delta < 1$, there exists a constant $C>0$ which depends on $p,\lambda$ and $\delta$, such that

\begin{equation}\label{fi2}
\begin{split}
&\mathbb{E}\big\|A^{k} - B \big\|^p_{-,\lambda}\leq C\epsilon^{p(1-2\lambda)}_k\Big\lceil \epsilon^{-2}_k T\Big\rceil^{1-\delta}\\
\end{split}
\end{equation}
and

\begin{equation}\label{fi4}
\mathbb{E}\big\|A^{k} - B \big\|^p_{T^k_1,\lambda}\le C \epsilon^{p(1-2\lambda)}_k,
\end{equation}
for every $k\ge 1$.
\end{lemma}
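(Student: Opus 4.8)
The plan is to reduce each estimate to a pathwise bound and then take expectations via Lemma \ref{meshlemma} and the finiteness of the negative moments of $\tau$. I would dispose of (\ref{fi4}) first, as it requires neither $\delta$ nor the mesh lemma. Since $A^{k}$ coincides with $B$ at every grid point and $\sup_t|A^{k}(t)-B(t)|\le\epsilon_k$ by (\ref{akb}), for $s>T^{k}_1\wedge T$ the numerator $|A^{k}(s)-B(s)|$ is at most $\epsilon_k$ while $s^\lambda\ge(T^{k}_1\wedge T)^\lambda$, so pathwise $\|A^{k}-B\|_{T^{k}_1,\lambda}\le\epsilon_k(T^{k}_1\wedge T)^{-\lambda}$. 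Raising to the power $p$, taking expectations and splitting on $\{T^{k}_1\le T\}$, I would use $T^{k}_1=\epsilon_k^2\tau$ in law together with $\mathbb{E}\tau^{-p\lambda}<\infty$ on that event, and bound the constant $T^{-p\lambda}$ by $C\epsilon_k^{-2p\lambda}$ on its complement; this yields $\mathbb{E}\|A^{k}-B\|^p_{T^{k}_1,\lambda}\le C\epsilon_k^{p(1-2\lambda)}$.

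For (\ref{fi2}) the crux is a pathwise estimate of $\|A^{k}-B\|_{-,\lambda}$ in terms of the reciprocal mesh. Fixing an admissible configuration $0\le s\le\bar{t}^{-}_k<\bar{t}_k\le t\le T$ with $\bar{t}_k=T^{k}_m$ and $\bar{t}^{-}_k=T^{k}_{m-1}$, I note that $\bar{s}^{+}_k\le\bar{t}_k$ is a genuine grid point, so $A^{k}(\bar{s}^{+}_k)=B(\bar{s}^{+}_k)$ and the numerator equals $|B(\bar{s}^{+}_k)-B(s)|\le|B(\bar{s}^{+}_k)-B(\bar{s}_k)|+|B(\bar{s}_k)-B(s)|\le 2\epsilon_k$, since consecutive grid values differ by $\epsilon_k$ and $B$ stays within $\epsilon_k$ of $B(\bar{s}_k)$ before the next grid time. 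As $s\le\bar{t}^{-}_k$, the denominator satisfies $(\bar{t}_k-s)^\lambda\ge(\bar{t}_k-\bar{t}^{-}_k)^\lambda=(\Delta T^{k}_m)^\lambda$. Taking the supremum over $s$ and then over the admissible top grid point yields the pathwise bound $\|A^{k}-B\|_{-,\lambda}\le C\epsilon_k\big(\max_{n:T^{k}_n\le T}(\Delta T^{k}_n)^{-1}\big)^{\lambda}$.

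It then remains to control $\mathbb{E}[X^{p\lambda}]$ for $X:=\max_{n:T^{k}_n\le T}(\Delta T^{k}_n)^{-1}$ via Lemma \ref{meshlemma}. When $p\lambda\ge 1$ I would apply the lemma directly with $q=p\lambda$ and $\alpha=\delta$, giving $\mathbb{E}[X^{p\lambda}]\le C\epsilon_k^{-2p\lambda}\lceil\epsilon_k^{-2}T\rceil^{1-\delta}$. The main obstacle is the regime $p\lambda<1$, where Lemma \ref{meshlemma} (valid for $q\ge 1$) does not apply directly; here I would first invoke Jensen's inequality, $\mathbb{E}[X^{p\lambda}]\le(\mathbb{E}X)^{p\lambda}$, then apply the lemma with $q=1$ and a free $\alpha\in(0,1)$, and finally choose $\alpha$ so that the exponent $(1-\alpha)p\lambda$ on $\lceil\epsilon_k^{-2}T\rceil$ does not exceed $1-\delta$ (take $\alpha=1-\tfrac{1-\delta}{p\lambda}$ when $1-\delta<p\lambda$, and any small $\alpha$ otherwise, using $\lceil\epsilon_k^{-2}T\rceil\ge 1$). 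In all cases $\mathbb{E}[X^{p\lambda}]\le C\epsilon_k^{-2p\lambda}\lceil\epsilon_k^{-2}T\rceil^{1-\delta}$, and combining with the pathwise bound of the previous paragraph produces $\mathbb{E}\|A^{k}-B\|^p_{-,\lambda}\le C\epsilon_k^{p}\epsilon_k^{-2p\lambda}\lceil\epsilon_k^{-2}T\rceil^{1-\delta}=C\epsilon_k^{p(1-2\lambda)}\lceil\epsilon_k^{-2}T\rceil^{1-\delta}$, which is exactly (\ref{fi2}).
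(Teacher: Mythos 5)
Your proof is correct and follows essentially the same route as the paper's: both arguments reduce each norm to a pathwise bound --- the numerator in $\|A^{k}-B\|_{-,\lambda}$ is $O(\epsilon_k)$ because $A^{k}(\bar{s}^{+}_k)$ is a Brownian grid value and $B$ stays within $\epsilon_k$ of $B(\bar{s}_k)$ on $[\bar{s}_k,\bar{s}^{+}_k]$, the denominator is controlled via $(\bar{t}_k-s)\ge \Delta \bar{t}_k$ for $s\le \bar{t}^{-}_k$, and $\|A^{k}-B\|_{T^{k}_1,\lambda}\le \epsilon_k (T^{k}_1)^{-\lambda}$ --- and then take expectations using Lemma \ref{meshlemma} for (\ref{fi2}) and the scaling $T^{k}_1\stackrel{d}{=}\epsilon_k^2\tau$ with $\mathbb{E}\tau^{-\lambda p}<\infty$ for (\ref{fi4}). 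The one point where you genuinely improve on the paper is the regime $p\lambda<1$: the paper applies Lemma \ref{meshlemma} directly with exponent $q=\lambda p$ even though that lemma is stated only for $q\ge 1$, whereas your Jensen step $\mathbb{E}X^{p\lambda}\le (\mathbb{E}X)^{p\lambda}$ followed by the lemma with $q=1$ and the tuned choice $\alpha=1-\frac{1-\delta}{p\lambda}$ (or any small $\alpha$ when $p\lambda\le 1-\delta$, using $\lceil \epsilon_k^{-2}T\rceil\ge 1$) closes this gap cleanly; an alternative repair, implicit in the paper, is that the proofs of Lemmas \ref{aux} and \ref{meshlemma} in fact go through verbatim for every $q>0$. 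One cosmetic remark on (\ref{fi4}): on the event $\{T^{k}_1\ge T\}$ the supremum defining $\|A^{k}-B\|_{T^{k}_1,\lambda}$ runs over an empty set and the norm vanishes, so the split is unnecessary and you can avoid the bound $T^{-p\lambda}\le C\epsilon_k^{-2p\lambda}$, which would otherwise make $C$ depend on $T$ and $\epsilon_1$ rather than only on $p,\lambda,\delta$ as the statement requires.
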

\begin{proof}
Fix $p\ge 1$. Let $(\lambda, \delta)$ be a pair of numbers satisfying $0 < \lambda < \frac{1}{2} + \frac{2\delta -2}{2p}, \max\{0,1-\frac{p}{2}\} < \delta < 1$. Notice if $0\leq s\leq \bar{t}^{-}_k$, then $(\bar{t}_k-s)\ge \bar{t}_k-\bar{t}^{-}_k=\Delta \bar{t}_k$. By applying Lemma \ref{meshlemma}, we have

\begin{equation}
\begin{split}
\mathbb{E}\|A^k- B\|^p_{-,\lambda}&\le \mathbb{E}\Bigg(\sup_{0\leq s\leq \bar{t}^{-}_k<\bar{t}_k\leq t\leq T}\frac{|B(s)-A^{k}(\bar{s}^{+}_k)|}{(\Delta \bar{t}_k)^\lambda}\Bigg)^p\\
&\leq C \epsilon^p_k\mathbb{E}\Big(\max_{n\ge 1}\left(\frac{1}{\Delta T^{k}_n}\right)\mathds{1}_{\{T^{k}_n\le T\}}\Big)^{\lambda p}\\
&\leq C\left(\epsilon_k^{p-2\lambda p}\Big\lceil \epsilon^{-2}_k T\Big\rceil^{1-\delta}\right),\\
\end{split}
\end{equation}
for a constant $C$ which depends on $\delta, p, \lambda$. This shows (\ref{fi2}). Now, we observe

$$\Bigg(\sup_{T^{k}_1 < s\le T}\frac{|A^{k}(s) - B(s)|}{s^\lambda}\Bigg)^p\le \epsilon^p_k (T^{k}_1)^{-\lambda p}~a.s,$$
for every $k\ge 1$. By definition, $T^{k}_1 = \inf\{t>0; |B(t)| = \epsilon_k\}\stackrel{d}{=}\epsilon^{2}_k \tau$, where $\tau$ is given in the proof of Lemma \ref{meshlemma}. Then,

$$\mathbb{E}(T^{k}_1)^{-\lambda p}\le C \epsilon^{-2\lambda p}_k,$$
for a constant $C$ which depends on $\lambda$ and $p$. We then get

$$\mathbb{E}\|A^k - B\|^p_{T^k_1,\lambda}\le C \epsilon_k^{p(1-2\lambda)},$$
for every $k\ge 1$. This shows (\ref{fi4}) and hence, we conclude the proof.
\end{proof}

\begin{theorem}\label{FBMappHless0.5}
Fix $0 < H < \frac{1}{2}$ and $p\ge 1$. For every pair $(\delta,\lambda)$ such that $\max\{0,1-\frac{pH}{2}\}< \delta < 1$, $\lambda \in \big(\frac{1-H}{2}, \frac{1}{2} + \frac{2\delta-2}{2p}\big)$, there exists a constant $C$ which depends on $p,\delta,H,T,\lambda$ such that

$$
\mathbb{E}\|B^{k}_H - B_H\|^p_\infty \le C \Big(\epsilon_k^{p(1-2\lambda)+ 2(\delta-1)}\Big),
$$
for every $k\ge 1$.
\end{theorem}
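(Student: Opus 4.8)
The plan is to integrate the pathwise estimate of Lemma~\ref{pathwiseestimate} to the $p$-th power. Fixing once and for all a small $\varepsilon\in(0,H/2)$ in that lemma and denoting its right-hand side by $\sum_{i=1}^{5}S_i^k$, I would use the elementary convexity bound $\big(\sum_{i=1}^{5}a_i\big)^p\le 5^{p-1}\sum_{i=1}^{5}a_i^p$ to reduce the proof to estimating $\mathbb{E}|S_i^k|^p$ for each of the five summands, and then retain the slowest-decaying one. The whole point is that the first summand already produces the announced rate, while the other four can be shown to decay strictly faster.

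The two summands built from $\|A^k-B\|$ are the easy ones. For the leading term $S_1^k=C\|A^k-B\|_{-,\lambda}T^{H-\frac12+\lambda}$ I would invoke \eqref{fi2} directly, together with $\lceil\epsilon_k^{-2}T\rceil^{1-\delta}\le(2T)^{1-\delta}\epsilon_k^{2(\delta-1)}$ (valid for $\epsilon_k$ small), to obtain $\mathbb{E}|S_1^k|^p\le C\epsilon_k^{p(1-2\lambda)+2(\delta-1)}$, which is exactly the target. For $S_4^k=C\|A^k-B\|_{T^k_1,\lambda}T^{\lambda+H-\frac12}$ I would use \eqref{fi4} to get $\mathbb{E}|S_4^k|^p\le C\epsilon_k^{p(1-2\lambda)}$; since $\delta<1$ makes $2(\delta-1)<0$, this is dominated by the target rate.

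The remaining three summands $S_2^k,S_3^k,S_5^k$ each carry a random Hölder-norm prefactor ($\|B\|_\lambda$ or $\|B_H\|_{H-\varepsilon}$) times a power of either $T^k_1\wedge T$ or $M_k:=\max_{n\ge1}\Delta T^k_n\mathds{1}_{\{T^k_n\le T\}}$. For each I would first apply H\"older's inequality with a large exponent $r>1$ to detach the prefactor, using that $\|B\|_\lambda$ (for $\lambda<\frac12$) and $\|B_H\|_{H-\varepsilon}$ have finite moments of every order by Garsia--Rodemich--Rumsey. For $S_3^k$ I would then bound $T^k_1\wedge T\le T^k_1\stackrel{d}{=}\epsilon_k^2\tau$ with $\mathbb{E}\tau^m<\infty$, giving a clean bound of order $\epsilon_k^{2p(\lambda+H-\frac12)}$. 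For $S_2^k$ and $S_5^k$ I would feed the detached mesh power into the upper bound \eqref{uppermesh}, choosing $r$ so that the exponent there exceeds $1$, which yields bounds of order $\epsilon_k^{2p(\lambda+H-\frac12)}\lceil\epsilon_k^{-2}T\rceil^{(1-\alpha)/r}$ and $\epsilon_k^{2p(H-\varepsilon)}\lceil\epsilon_k^{-2}T\rceil^{(1-\alpha)/r}$ respectively.

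The crux is to verify that these three exponents strictly exceed the target $p(1-2\lambda)+2(\delta-1)$, so that the spurious factor $\lceil\epsilon_k^{-2}T\rceil^{(1-\alpha)/r}\approx\epsilon_k^{-2(1-\alpha)/r}$ can be absorbed by taking $\alpha$ close to $1$ and $r$ large. This is where the hypotheses enter decisively: the lower bound $\lambda>\frac{1-H}{2}$ is equivalent to $4p\lambda+2p(H-1)>0$, which forces the gap $2p(\lambda+H-\frac12)-[p(1-2\lambda)+2(\delta-1)]=4p\lambda+2p(H-1)+2(1-\delta)$ (governing $S_2^k,S_3^k$) to be strictly positive, and together with $\varepsilon<H/2$ forces the analogous gap for $S_5^k$ to exceed $pH-2p\varepsilon+2(1-\delta)>0$; meanwhile the upper bound $\lambda<\frac12+\frac{\delta-1}{p}$ guarantees the target exponent itself is positive. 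Summing the five contributions and keeping $S_1^k$ then gives the claim, with $C$ depending on $p,\delta,H,T,\lambda$ through the fixed choices of $\varepsilon,\alpha,r$. I expect the main obstacle to be precisely this exponent bookkeeping for $S_2^k,S_3^k,S_5^k$: checking that the strictly positive gap created by $\lambda>\frac{1-H}{2}$ (and $\varepsilon<H/2$) genuinely dominates the $\epsilon_k^{-2(1-\alpha)/r}$ loss coming from the mesh estimate \eqref{uppermesh}.
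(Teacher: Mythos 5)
Your proposal is correct and follows essentially the same route as the paper's proof: raise the five-term pathwise bound of Lemma~\ref{pathwiseestimate} to the $p$-th power, control the $\|A^k-B\|$ terms via \eqref{fi2} and \eqref{fi4}, handle the terms with prefactors $\|B\|_\lambda$ and $\|B_H\|_{H-\varepsilon}$ using their Gaussian moments together with \eqref{uppermesh} and $T^k_1\stackrel{d}{=}\epsilon_k^2\tau$, and then compare exponents, where $\lambda>\frac{1-H}{2}$ and $\varepsilon$ small make $\epsilon_k^{p(1-2\lambda)+2(\delta-1)}$ the dominant rate. Your explicit H\"older detachment with exponent $r$ and the choice of $\alpha$ close to $1$ simply spell out what the paper compresses into ``we make use of the Gaussian tails of the Brownian motion and FBM jointly with Lemma~\ref{touches1} and \eqref{uppermesh}''.
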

\begin{proof}
In the sequel, $C$ is a constant which may differ form line to line. Let us fix $0 < H< \frac{1}{2}, p\ge 1$ and $0 < \varepsilon < H$. By Lemma \ref{pathwiseestimate}, if $\frac{1}{2}-H < \lambda < \frac{1}{2}$, there exists a constant $C$ which depends on $H, T$ and $p\ge 1$ such that

\begin{eqnarray*}
\|B^{k}_H - B_H\|^p_\infty&\le& C \|A^{k} - B\|^p_{-,\lambda} + C \|B\|^p_\lambda (\max_{m\ge 1}\Delta T^{k}_n)^{p(H-\frac{1}{2}+\lambda)}\mathds{1}_{\{T^{k}_n \le T\}}\\
& &\\
&+&C\|B\|^p_\lambda\left(T^{k}_1\wedge T\right)^{p(\lambda+H-\frac{1}{2})} +  C \|A^{k} - B\|^p_{T^{k}_1,\lambda}\\
& &\\
&+& \|B_H\|^p_{H-\varepsilon} (\max_{n\ge 1} \Delta T^{k}_n\mathds{1}_{\{T^{k}_n \le T\}})^{p(H-\varepsilon)}~a.s,
\end{eqnarray*}
for every $k\ge 1$. Now, we notice $\frac{(1-H)}{2} < \frac{1}{2} + \frac{2\delta-2}{2p}$ if, only if, $1-\frac{pH}{2} < \delta < 1$. For each $\lambda, \delta$ satisfying $\frac{(1-H)}{2} < \lambda < \frac{1}{2} + \frac{2\delta -2}{2p}, \max\{0,1-\frac{pH}{2}\} < \delta < 1$, we make use of the Gaussian tails of the Brownian motion and FBM jointly with Lemma \ref{touches1} and (\ref{uppermesh}) to get a constant $C>0$ which depends on $H,T,p, \lambda, \varepsilon$ and $\delta$ such that
$$
\mathbb{E}\|B^{k}_H - B_H\|^p_\infty \le C \epsilon_k^{p(1-2\lambda)}\Big\lceil \epsilon^{-2}_k T\Big\rceil^{1-\delta}  + C \epsilon_k^{2p(H-\frac{1}{2}+\lambda)}\Big\lceil \epsilon^{-2}_k T\Big\rceil^{1-\delta}+ C\epsilon^{2p(H-\varepsilon)}_k\Big\lceil \epsilon^{-2}_k T \Big\rceil^{1-\delta},
$$
for every $k\ge 1$. By noticing that $\lceil x\rceil\le 1 + x$ for every $x\ge 0$, we have
$$
\mathbb{E}\|B^{k}_H - B_H\|^p_\infty \le C \Big(\epsilon_k^{p(1-2\lambda)+ 2(\delta-1)}  + \epsilon_k^{2p(H-\frac{1}{2}+\lambda)+ 2(\delta-1)}+ \epsilon^{2p(H-\varepsilon) + 2(\delta-1)}_k\Big),
$$
for every $k\ge 1$. In fact, whenever the pair $(\delta,\lambda)$ satisfies $\max\{0,1-\frac{pH}{2}\} < \delta < 1$ and $\frac{1-H}{2} < \lambda < \frac{1}{2}+\frac{2(\delta-1)}{2p}$, there exists $C$ such that

$$
\mathbb{E}\|B^{k}_H - B_H\|^p_\infty \le C \Big(\epsilon_k^{p(1-2\lambda)+ 2(\delta-1)}  + \epsilon^{2p(H-\varepsilon) + 2(\delta-1)}_k\Big),
$$
for every $k\ge 1$. Finally, by taking $\varepsilon$ small enough, we conclude the proof.
\end{proof}

\

\textbf{Acknowledgement:} Alberto Ohashi acknowledges the financial support of Math Amsud grant 88887.197425/2018-00 and CNPq-Bolsa de Produtividade de Pesquisa grant 303443/2018-9. Francys A. de Souza acknowledges the financial support of FAPESP 2017/23003-6. The authors would like to thank the Referee for the careful reading and suggestions which considerably improved the presentation of this paper.


\end{document}